\newcommand{\mm}{\mathfrak m}
\newcommand{\kk}{\mathrm k}
\DeclareMathOperator{\pnt}{\raise 0.5mm \hbox{\large\bf.}}
\DeclareMathOperator{\diam}{diam}
\DeclareMathOperator{\Ann}{Ann}
\DeclareMathOperator{\dist}{dist}
\DeclareMathOperator{\supp}{supp}
\let\phi=\varphi
\newtheorem{thm}{\bf Theorem}[section]
\newtheorem{lem}[thm]{\bf Lemma}
\newtheorem{cor}[thm]{\bf Corollary}
\newtheorem{prop}[thm]{\bf Proposition}
\newtheorem{conj}[thm]{\bf Conjecture}
\theoremstyle{definition}
\newtheorem{defn}[thm]{\bf Definition}
\newtheorem{prob}[thm]{\bf Problem}
\theoremstyle{plain}
\newtheorem*{thm*}{Theorem}
\newtheorem*{lem*}{Lemma}
\newtheorem*{cor*}{Corollary}
\newtheorem*{claim*}{Claim}
\newtheorem*{defn*}{Definition}
\theoremstyle{remark}
\newtheorem{rem}[thm]{Remark}
\newtheorem{exm}[thm]{Example}
\numberwithin{equation}{section}
\title{Associated primes of the second power of closed neighborhood ideals of graphs}
\author{Ha Thi Thu Hien}
\address{Foreign Trade University, 91 Chua Lang, Hanoi, Vietnam}
\email{thuhienha504@gmail.com}
\author{Thanh Vu}
\address{Institute of Mathematics, VAST, 18 Hoang Quoc Viet, Hanoi, Vietnam}
\email{vuqthanh@gmail.com}
\thanks{}
\date{\today}
\subjclass[2020]{13C15, 13F55}
\keywords{Associated prime, closed neighborhood ideal,  powers of ideal}
\begin{document}

\begin{abstract} We study simple graphs for which the maximal homogeneous ideal is an associated prime of the second power of their closed neighborhood ideals. In particular, we show that such graphs must have diameter at most $6$, and that those with diameter $2$ must be vertex diameter-$2$-critical.
\end{abstract}

\maketitle

\section{Introduction} 
Lov\'asz introduced the neighborhood complex of a simple graph to prove Kneser's conjecture \cite{L}. Matsushita and Wakatsuki \cite{MW} proved that the Alexander dual of the neighborhood complex is the dominance complex of the complement of $G$. The neighborhood complexes and dominance complexes of graphs are fundamental objects in combinatorics and have been studied for a long time \cite{B}. Nonetheless, the squarefree monomial ideals associated with these complexes were only studied fairly recently. In particular, Sharifan and Moradi \cite{SM} and Honeycutt and Sather-Wagstaff \cite{HS} described the primary decomposition of the closed neighborhood ideal of a simple graph. They studied fundamental algebraic properties of this ideal, namely the unmixed property and the Castelnuovo-Mumford regularity. 

In this work, we study the associated primes of powers of the closed neighborhood ideals of simple graphs. Let us first introduce some notation. Let $G$ be a finite simple graph on the vertex set $V(G) = [n] = \{1,\ldots,n\}$ and the edge set $E(G) \subseteq V(G) \times V(G)$. For a vertex $i \in V(G)$, the neighborhood of $i$ is the set $N_G(i) = \{j \mid \{i,j\} \text{ is an edge in } G\}$. Its closed neighborhood is $N_G[i] = N_G(i) \cup \{i\}$. We denote by $S = \kk[x_1,\ldots,x_n]$ a standard graded polynomial ring over a field $\kk$. For each subset $W \subseteq [n]$, we denote by $x_W$ the monomial $\prod_{j\in W} x_j$. The closed neighborhood ideal of $G$ is defined by
$$NI(G) = (x_{N_G[i]} \mid i = 1, \ldots, n).$$
In particular, $NI(G)$ is generated by at most $n$ squarefree monomials.

In general, there are several combinatorial descriptions of the associated primes of powers of squarefree monomial ideals; see, for instance, \cite{BFT, FHV2, HM, TT}. Nonetheless, these general descriptions are often very difficult to apply in identifying embedded primes of specific powers of a given squarefree monomial ideal. Francisco, Ha, and Van Tuyl \cite{FHV1} showed that the embedded associated primes of the second power of the cover ideal of a graph are precisely those arising from the odd holes of the graph. Lam and Trung \cite{LT} described the associated primes of powers of the edge ideal of a graph in terms of ear decompositions. In other words, when monomial ideals arise naturally from a graph, one often expects a combinatorial description of the associated primes of their powers. Let $\mm = (x_1, \ldots, x_n)$ denote the maximal homogeneous ideal of $S$. Guided by this philosophy, we are interested in the following problem:

\begin{prob}\label{prob_a} Classify graphs $G$ for which $\mm$ is an associated prime of a certain power of $NI(G)$.    
\end{prob}

Nasernejad and Qureshi \cite{NQ} recently proved that when $G$ is a strongly chordal graph then $NI(G)$ is normally torsion-free. In particular, $\mm$ is not an associated prime of $S/NI(G)^t$ for all $t \ge 1$. The problem of classifying graphs for which their closed neighborhood ideals are normally torsion-free was also studied in \cite{NQBM} and is of interest in its own right. Problem \ref{prob_a} is on the other extreme.

As a first step, in this work, we study graphs $G$ for which $\mm$ is an associated prime of the second power of $NI(G)$. By \cite[Theorem 1.1]{NV}, such a graph must be connected. We first show that when $\diam(G) \leq 2$, this condition characterizes exactly the class of vertex diameter-$2$-critical graphs. Before stating the result, we recall the notion of vertex diameter-critical graphs. For a vertex $x \in V(G)$, we denote by $G - x$ the subgraph of $G$ obtained by removing $x$ and all edges incident to $x$.

\begin{defn}
A simple connected graph $G$ with $\diam(G) = d$ is said to be \emph{vertex diameter-$d$-critical} if $\diam(G - x) \geq d + 1$ for every $x \in V(G)$.
\end{defn}

\begin{thm}\label{thm_diam_2}
Let $G$ be a simple graph of diameter at most $2$, and let $NI(G)$ denote the closed neighborhood ideal of $G$. Then $\mm$ is an associated prime of $S/NI(G)^2$ if and only if $G$ is vertex diameter-$2$-critical.
\end{thm}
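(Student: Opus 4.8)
The plan is to convert the condition ``$\mm\in\ass(S/NI(G)^2)$'' into a combinatorial statement about the closed neighbourhoods of $G$, to use the hypothesis $\diam(G)\le 2$ to collapse that statement to one concerning the single monomial $x_{[n]}=x_1\cdots x_n$, and finally to recognize the surviving statement as the definition of a vertex diameter-$2$-critical graph. I will assume throughout that $G$ has at least two vertices, so that $\diam(G)\le 2$ forces $G$ to be connected with no isolated vertex (connectedness also follows from \cite[Theorem 1.1]{NV}).

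\emph{Step 1 (a monomial criterion).} Since $NI(G)^2$ is a monomial ideal and $\mm$ is the maximal graded ideal, $\mm\in\ass(S/NI(G)^2)$ if and only if there is a monomial $f\notin NI(G)^2$ with $x_i f\in NI(G)^2$ for every $i$; I call the exponent vector $\ab$ of such an $f$ a \emph{witness}. A monomial lies in $NI(G)^2$ exactly when it is divisible by one of the generators $x_{N_G[p]}x_{N_G[q]}$ with $p,q\in[n]$ (possibly equal), and the exponent of $x_k$ in $x_{N_G[p]}x_{N_G[q]}$ is $2$ on $N_G[p]\cap N_G[q]$, is $1$ on $N_G[p]\bigtriangleup N_G[q]$, and is $0$ elsewhere. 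Thus $\ab$ is a witness precisely when (A) for every vertex $i$ there are $p,q$ with $x_{N_G[p]}x_{N_G[q]}\mid x_i\,x^{\ab}$, and (B) for all $p,q$ one has $x_{N_G[p]}x_{N_G[q]}\nmid x^{\ab}$.

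\emph{Step 2 (reduction to the monomial $x_{[n]}$).} First I would show that every witness $\ab$ satisfies $a_j\le 1$ for every $j$. Fixing $j$ and applying (A) at $i=j$ gives $p,q$ with $x_{N_G[p]}x_{N_G[q]}\mid x_j\,x^{\ab}$, so the exponent of each $x_k$ in $x_{N_G[p]}x_{N_G[q]}$ is $\le a_k$ for $k\ne j$ and $\le a_j+1$ for $k=j$; but (B) says this monomial does not divide $x^{\ab}$, so at some coordinate its exponent exceeds the corresponding $a_k$, and by the previous clause that coordinate must be $j$. Hence the exponent of $x_j$ in $x_{N_G[p]}x_{N_G[q]}$ exceeds $a_j$ while being at most $2$, so $a_j\le 1$; thus $\ab=\chi_W$ for some $W\subseteq[n]$. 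Next, $\diam(G)\le 2$ gives $N_G[p]\cap N_G[q]\ne\emptyset$ for all $p,q$ (adjacent vertices lie in each other's closed neighbourhoods, vertices at distance $2$ share a neighbour, and $p\in N_G[p]$), so no generator $x_{N_G[p]}x_{N_G[q]}$ divides $x_{[n]}$; that is, $\chi_{[n]}$ satisfies (B) unconditionally. Since enlarging the exponent vector can only make (A) easier to satisfy, $\chi_W$ satisfying (A) implies $\chi_{[n]}$ satisfies (A). Combining, $\mm\in\ass(S/NI(G)^2)$ if and only if $x_{[n]}$ is a witness, equivalently $\chi_{[n]}$ satisfies (A). Unwinding (A) for $\chi_{[n]}$ --- for $k\ne i$ the exponent of $x_k$ in $x_{N_G[p]}x_{N_G[q]}$ must be $\le 1$, i.e.\ $k\notin N_G[p]\cap N_G[q]$ --- and using the nonemptiness just noted, this is exactly the condition
\[
(\ast)\qquad\text{for every }i\in V(G)\text{ there exist }p,q\in V(G)\text{ with }N_G[p]\cap N_G[q]=\{i\}.
\]

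\emph{Step 3 ($(\ast)$ $=$ vertex diameter-$2$-criticality, and the main obstacle).} If $(\ast)$ holds then $G$ is not complete (otherwise every $N_G[p]\cap N_G[q]$ would equal the whole vertex set), so $\diam(G)=2$; and for each $x$, choosing $p,q$ with $N_G[p]\cap N_G[q]=\{x\}$ one checks, using that $G$ has no isolated vertex, that $p,q,x$ are pairwise distinct and that $p\not\sim q$, whence $N_G(p)\cap N_G(q)=N_G[p]\cap N_G[q]=\{x\}$; then $p$ and $q$ survive in $G-x$, are non-adjacent, and have no common neighbour there, so $\diam(G-x)\ge 3$. Conversely, if $G$ is vertex diameter-$2$-critical, then for each $x$ pick $u,v\ne x$ with $\dist_{G-x}(u,v)\ge 3$; these are non-adjacent in $G$ (an edge $uv$ would persist in $G-x$), hence $\dist_G(u,v)=2$, and every common neighbour of $u$ and $v$ must be $x$ (another one would survive in $G-x$), so $N_G(u)\cap N_G(v)=\{x\}$ and, as $u\not\sim v$, also $N_G[u]\cap N_G[v]=\{x\}$; this is $(\ast)$. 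The crux of the whole argument is the pair of observations in Step 2: that (A) and (B) together force witnesses to be squarefree (this invokes (A) at a single coordinate and is insensitive to the diameter, hence robust), and that $\diam(G)\le 2$ makes $\chi_{[n]}$ automatically satisfy (B), together pinning the witness down to $x_{[n]}$. The remainder is bookkeeping with closed neighbourhoods together with the routine disposal of degenerate cases (a single vertex, or $G$ complete).
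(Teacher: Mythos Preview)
Your proof is correct and follows essentially the same strategy as the paper: reduce to a squarefree witness, use $\diam(G)\le 2$ to pin that witness down to $x_{[n]}$, and then translate the condition $NI(G)^2:x_{[n]}=\mm$ into vertex diameter-$2$-criticality via the characterization $N_G[p]\cap N_G[q]=\{i\}$. The only minor differences are organizational: you re-prove squarefreeness of witnesses directly (the paper cites Lemma~\ref{lem_quotient}), and you reduce by \emph{enlarging} an arbitrary witness to $x_{[n]}$ whereas the paper argues by contradiction that the support cannot be proper; your Step~3 is also more explicit than the paper's forward direction.
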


When $\diam(G) \geq 3$, we show that the condition for $\mm$ to be an associated prime of $S/NI(G)^2$ can be detected by examining a hypergraph constructed from pairs of vertices at distance at least $3$ in $G$. More precisely, we define a hypergraph $H$ with vertex set $V(H) = V(G) = [n]$, and edges given by sets of the form $N_G[i] \cup N_G[j]$ for all pairs $i, j \in V(G)$ such that $\dist_G(i, j) \geq 3$. We then have the following characterization:

\begin{thm}\label{thm_diam_3}
Let $G$ be a simple graph with $\diam(G) \geq 3$, and let $NI(G)$ be its closed neighborhood ideal. Let $H$ be the hypergraph associated to $G$ as described above. Then $\mm$ is an associated prime of $S/NI(G)^2$ if and only if there exists a subset $C \subseteq V(G)$ such that:
\begin{enumerate}
    \item $C$ is a vertex cover of $H$;
    \item for every $i \in C$, there exists an edge $e \in E(H)$ such that $C \cap e = \{i\}$;
    \item for every $j \in V(G) \setminus C$, there exist vertices $j_1, j_2 \in N_G(j)$ such that $N_G[j_1], N_G[j_2] \subseteq V(G) \setminus C$, and 
    \[
    \dist_G(j_1, j_2) = 2, \quad \text{and} \quad \dist_{G - j}(j_1, j_2) \geq 3.
    \]
\end{enumerate}
\end{thm}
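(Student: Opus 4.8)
The plan is to reformulate the problem combinatorially in terms of ``witnesses'' and then to recognize that conditions (1)--(3) precisely transcribe what it means for one particular squarefree monomial to be such a witness. Recall that $\mm$ is an associated prime of $S/NI(G)^2$ if and only if there is a monomial $f$ with $NI(G)^2 : f = \mm$, equivalently a monomial $f \notin NI(G)^2$ such that $x_k f \in NI(G)^2$ for every $k \in [n]$; I will call such an $f$ a \emph{witness}. Writing $u_i := x_{N_G[i]}$, the ideal $NI(G)^2$ is generated by the products $u_iu_j$ (with $i = j$ allowed), so a monomial $g$ lies in $NI(G)^2$ exactly when $u_iu_j \mid g$ for some pair $i,j$; note that the exponent vector of $u_iu_j$ has all coordinates in $\{0,1,2\}$, and that $u_iu_j$ is squarefree if and only if $N_G[i]\cap N_G[j] = \emptyset$, if and only if $\dist_G(i,j) \ge 3$, in which case $i \ne j$ and $u_iu_j = x_{N_G[i]\cup N_G[j]}$ is the monomial on the corresponding edge of $H$. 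We may assume $G$ is connected, so in particular $G$ has no isolated vertex.

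The key step will be to prove that \emph{every witness is squarefree}. Suppose $f$ is a witness and $x_w^2 \mid f$. Since $x_wf \in NI(G)^2$ there is a pair $p,q$ with $u_pu_q \mid x_wf$. At the coordinate $w$ the exponent in $u_pu_q$ is at most $2$, which is at most the exponent of $w$ in $f$; at every coordinate $v \ne w$ the exponent of $v$ in $x_wf$ equals its exponent in $f$. Hence $u_pu_q \mid f$, contradicting $f \notin NI(G)^2$. Consequently every witness has the form $x_B$ for a subset $B \subseteq V(G)$, and I will set $C := V(G) \setminus B$; thus $\mm$ is an associated prime of $S/NI(G)^2$ if and only if $x_B$ is a witness for some $B$.

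It remains to show that, with $C = V(G) \setminus B$, ``$x_B$ is a witness'' is equivalent to (1)--(3); this is where the bookkeeping lives and the step I expect to be most delicate. First, $x_B \notin NI(G)^2$ means that no squarefree $u_pu_q$ divides $x_B$, i.e. no edge of $H$ lies inside $B$, i.e. $C$ meets every edge of $H$ --- this is (1). Next, for $k \in C$ the monomial $x_kx_B$ is squarefree, so $x_kx_B \in NI(G)^2$ forces the dividing product to be $x_e$ for an edge $e$ of $H$ with $e \subseteq B \cup \{k\}$, i.e. $e \cap C \subseteq \{k\}$; by (1) this is $e \cap C = \{k\}$, and conversely such an edge gives the divisibility. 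Ranging over $k \in C$ yields (2). Finally, for $k \in B$ the monomial $x_kx_B$ is squarefree except for a square at $k$, so $u_pu_q \mid x_kx_B$ forces $N_G[p]\cup N_G[q] \subseteq B$ and $N_G[p]\cap N_G[q] \subseteq \{k\}$; the possibility $N_G[p]\cap N_G[q] = \emptyset$ is excluded by (1), so $N_G[p]\cap N_G[q] = \{k\}$, and --- using that $G$ has no isolated vertex --- this forces $p \ne q$, $p,q \in N_G(k)$, $p \not\sim q$, and $k$ the unique common neighbour of $p$ and $q$, which is exactly $\dist_G(p,q) = 2$, $\dist_{G-k}(p,q) \ge 3$, together with $N_G[p], N_G[q] \subseteq B$. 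Conversely the vertices $j_1,j_2$ provided by (3) at $j = k$ satisfy $N_G[j_1]\cap N_G[j_2] = \{k\}$ and $N_G[j_1]\cup N_G[j_2] \subseteq B$, whence $u_{j_1}u_{j_2} \mid x_kx_B$. Ranging over $k \in B = V(G)\setminus C$ yields (3). Assembling these equivalences finishes the argument; the hypothesis $\diam(G) \ge 3$ enters only to guarantee that $H$ has an edge, which forces any admissible $C$ to be nonempty. The main obstacle is precisely this last transcription --- in particular making sure, in the $k \in B$ case, that the dichotomy ``$N_G[p]\cap N_G[q]$ empty versus a single vertex'' maps exactly onto conditions (1) and (3), and disposing of the degenerate pairs ($p = q$, $p = k$, $p \sim q$) via connectedness --- whereas the squarefreeness reduction, though it is the conceptual crux, is a two-line argument once one notices that the generators of $NI(G)^2$ have exponents at most $2$.
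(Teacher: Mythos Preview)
Your proof is correct and follows essentially the same route as the paper's: reduce to a squarefree witness $f$, set $C = V(G)\setminus\supp(f)$, and translate the colon conditions into (1)--(3). The only real difference is that the paper obtains squarefreeness of $f$ by invoking a general lemma (any monomial $f$ with $J^t:f=P$ for $J$ squarefree satisfies $\deg_i f<t$), whereas you give the direct two-line argument using that generators of $NI(G)^2$ have exponents at most $2$; your treatment of the degenerate cases in part (3) ($p=q$, $p=k$, $p\sim q$) is also more explicit than the paper's.
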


In the next section, we recall necessary notation and prove our main results. We then provide examples and applications of our results.

\section{Second powers of closed neighborhood ideals}
Throughout this paper, we denote by $S = \kk[x_1, \ldots, x_n]$ a polynomial ring over a field $\kk$, and by $\mm = (x_1, \ldots, x_n)$ its maximal homogeneous ideal. For a nonzero monomial $f \in S$ and an index $i$ with $1 \leq i \leq n$, we denote by $\deg_i(f)$ the largest exponent $r$ such that $x_i^r$ divides $f$. The \emph{support} of $f$, denoted by $\supp(f)$, is the set of all variables of $S$ that divide $f$.

\begin{defn}
Let $I$ be a nonzero monomial ideal of $S$. For each $i = 1, \ldots, n$, the \emph{$i$-degree} of $I$ is defined by
\[
\rho_i(I) = \max \{ \deg_i(f) \mid f \text{ is a minimal monomial generator of } I \}.
\]
\end{defn}

\begin{defn}
Let $R$ be a Noetherian ring and $M$ a finitely generated $R$-module. A prime ideal $P \subseteq R$ is called an \emph{associated prime} of $M$ if there exists an element $x \in M$ such that
\[
P = \Ann(x) = \{ a \in R \mid ax = 0 \}.
\]
\end{defn}

We begin with a simple lemma.

\begin{lem}\label{lem_quotient}
Let $I$ be a monomial ideal and let $P = (x_1, \ldots, x_t)$ be a monomial prime ideal. Then $P$ is an associated prime of $S/I$ if and only if there exists a monomial $f$ such that $\deg_i(f) < \rho_i(I)$ for all $i = 1, \ldots, t$ and $I : f = P$. 

In particular, if $J$ is a squarefree monomial ideal and $P$ is an associated prime of $S/J^t$, then $\deg_i(f) < t$ for all $i = 1, \ldots, t$.
\end{lem}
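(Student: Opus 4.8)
The plan is to use the standard correspondence between associated primes of monomial quotients and monomial colon ideals, combined with the observation that for monomial ideals one may localize/restrict to the variables appearing in $P$. First I would recall that for a monomial ideal $I$ and a monomial prime $P = (x_1,\ldots,x_t)$, the prime $P$ is associated to $S/I$ precisely when $P = I : g$ for some monomial $g$; this is the monomial analogue of the general fact that associated primes of $S/I$ are exactly the ideals of the form $I : h$ that happen to be prime, together with the fact (easily checked by comparing generators) that if $I : h$ is prime then one may replace $h$ by a monomial $g$ dividing $h$ with $I : g = I : h$. So the content to be proved is the normalization that $g$ can be chosen with $\deg_i(g) < \rho_i(I)$ for all $i \le t$.

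The key step is the reduction of exponents. Suppose $P = I : g$ for a monomial $g$, and suppose $\deg_i(g) \ge \rho_i(I)$ for some $i \le t$. I would then set $g' = g / x_i$ and claim $I : g' = P$ as well. The inclusion $I : g' \subseteq I : g = P$ is clear since $g' \mid g$. For the reverse inclusion, take any monomial $h \in P$; since $x_i \in P = I : g$, we have $x_i g \in I$, so $x_i g$ is divisible by some minimal generator $u$ of $I$. Because $\deg_i(g) \ge \rho_i(I) \ge \deg_i(u)$, the variable $x_i$ is not needed to cover $u$, i.e. $u \mid g$; hence $g \in I$, forcing $P = I : g = S$, a contradiction. (Alternatively, and more cleanly: $\deg_i(g) \ge \rho_i(I)$ already forces, via this divisibility argument applied to $x_i g \in I$, that $g \in I$ unless... — so in fact the hypothesis $\deg_i(g) \ge \rho_i(I)$ together with $x_i \in I : g$ is contradictory, which is even stronger than what we need.) Iterating this reduction over the finitely many indices $i \le t$ produces a monomial $f \mid g$ with $\deg_i(f) < \rho_i(I)$ for all $i = 1,\ldots,t$ and still $I : f = P$. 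The converse direction of the "if and only if" is immediate: if such an $f$ exists with $I : f = P$, then $P$ is by definition an associated prime of $S/I$ realized by the class of $f$.

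For the "in particular" clause, I would simply apply the main statement with $I = J^t$ where $J$ is squarefree: since $J$ is generated by squarefree monomials, a minimal generator of $J^t$ is a product of $t$ (not necessarily distinct) squarefree generators of $J$, so $\deg_i$ of any such generator is at most $t$, giving $\rho_i(J^t) \le t$; hence the monomial $f$ produced above satisfies $\deg_i(f) < \rho_i(J^t) \le t$ for all $i = 1,\ldots,t$.

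I do not expect any serious obstacle here; the only point requiring a little care is making the exponent-reduction argument precise and noting that one can pass from an arbitrary witness $h$ with $I : h = P$ prime to a monomial witness, using that $I$ is a monomial ideal so that $I : h = \bigcap_{v} (I : v)$ over the monomials $v$ in the support of $h$ and a prime cannot be a proper intersection.
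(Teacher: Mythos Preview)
Your proof is correct and, once you reach the parenthetical observation, coincides with the paper's argument: assuming $\deg_i(g)\ge\rho_i(I)$ together with $x_i\in I:g$ forces some minimal generator $u$ of $I$ to divide $g$, contradicting $I:g=P\neq S$, so every monomial witness already satisfies the degree bounds and no exponent reduction is needed. The paper simply cites the monomial colon characterization (\cite[Corollary~1.3.10]{HH}) and goes straight to this contradiction, bypassing the reduction scaffolding you set up.
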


\begin{proof}
By \cite[Corollary 1.3.10]{HH}, it suffices to show that if $I : f = P$, then $\deg_i(f) < \rho_i(I)$ for all $i = 1, \ldots, t$. Suppose, for contradiction, that $\deg_1(f) \geq \rho_1(I)$. Since $I : f = P$, there exists a minimal generator $g$ of $I$ such that $g / \gcd(g, f) = x_1$. In particular, $\deg_1(g) > \deg_1(\gcd(g, f))$. 

Because $g$ is a minimal generator of $I$, we have
\[
\rho_1(I) \geq \deg_1(g) > \deg_1(\gcd(g, f)) = \min(\deg_1(g), \deg_1(f)) = \deg_1(f),
\]
which contradicts the assumption that $\deg_1(f) \geq \rho_1(I)$. The result follows.
\end{proof}

\begin{defn}
Let $G$ be a connected simple graph. For vertices $u$ and $v$ in $G$, the \emph{distance} between $u$ and $v$, denoted by $\dist_G(u, v)$, is the length of the shortest path in $G$ connecting them. The \emph{diameter} of $G$, denoted by $\diam(G)$, is the maximum distance between any two vertices in $G$.
\end{defn}

We now prove Theorem~\ref{thm_diam_2}.

\begin{proof}[Proof of Theorem~\ref{thm_diam_2}]
First, assume that $\mm$ is an associated prime of $S/NI(G)^2$. By Lemma~\ref{lem_quotient}, there exists a squarefree monomial $f$ such that $NI(G)^2 : f = \mm$. We claim that $f = x_1 \cdots x_n$. Assume, for contradiction, that $\supp(f)$ is a proper subset of $[n]$. Without loss of generality, assume that $1 \notin \supp(f)$. Since $x_1 f \in NI(G)^2$, there exist vertices $j, k \in V(G)$ such that $x_{N_G[j]} x_{N_G[k]}$ divides $x_1 f$. Since $1 \notin \supp(f)$, $x_1f$ is squarefree. Hence, $N_G[j] \cap N_G[k] = \emptyset$, implying that $\dist_G(j,k) \geq 3$, a contradiction to the assumption that $\diam(G) = 2$.

Now, assume that $G$ is a vertex diameter-$2$-critical graph. Let $f = x_1 \cdots x_n$. We will show that $NI(G)^2 : f = \mm$. Since $\diam(G) = 2$, we have $\dist_G(i,j) \leq 2$ for all $i, j \in V(G)$. Hence, $N_G[i] \cap N_G[j] \neq \emptyset$ for all $i, j$, so $f \notin NI(G)^2$.

Next, for each $i = 1, \ldots, n$, we will show that $x_i f \in NI(G)^2$. Since $G$ is vertex diameter-$2$-critical, we have $\diam(G - i) \geq 3$. Therefore, there exist vertices $j, k \in V(G) \setminus \{i\}$ such that $\dist_{G - i}(j, k) \geq 3$. This implies that $N_G[j] \cap N_G[k] = \{i\}$, and hence $x_{N_G[j]} x_{N_G[k]}$ divides $x_i f$. Therefore, $x_i f \in NI(G)^2$ for all $i = 1, \ldots, n$, and so $NI(G)^2 : f = \mm$, as desired. This completes the proof.
\end{proof}

We now prove Theorem~\ref{thm_diam_3}.

\begin{proof}[Proof of Theorem~\ref{thm_diam_3}]
First, assume that $\mm$ is an associated prime of $S/NI(G)^2$. By Lemma~\ref{lem_quotient}, there exists a squarefree monomial $f$ such that $NI(G)^2 : f = \mm$. Since $\diam(G) \geq 3$, there exist vertices $j$ and $k$ such that $N_G[j] \cap N_G[k] = \emptyset$. Hence, $\supp(f)$ is a proper subset of $[n]$. Let $C = [n] \setminus \supp(f)$. We will show that $C$ satisfies the required conditions.

Let $j, k \in V(G)$ with $\dist_G(j, k) \geq 3$. Then $f$ is not divisible by $x_{N_G[j]} x_{N_G[k]}$, so
\[
N_G[j] \cup N_G[k] \nsubseteq \supp(f) \quad \Longrightarrow \quad (N_G[j] \cup N_G[k]) \cap C \neq \emptyset.
\]
This shows that $C$ is a vertex cover of $H$.

Now let $i \in C$. Since $x_i f \in NI(G)^2$, there exist vertices $j, k \in V(G)$ such that $x_{N_G[j]} x_{N_G[k]}$ divides $x_i f$. That is, $N_G[j] \cup N_G[k] \subseteq \supp(f) \cup \{i\}$. But $x_{N_G[j]} x_{N_G[k]} \nmid f$, so at least one variable in $x_{N_G[j]} x_{N_G[k]}$ must be $x_i$. Hence, $(N_G[j] \cup N_G[k]) \cap C = \{i\}$.

Next, let $j \in V(G) \setminus C$. Since $x_j f \in NI(G)^2$, there exist vertices $j_1, j_2 \in V(G)$ such that
\[
x_{N_G[j_1]} x_{N_G[j_2]} \mid x_j f \quad \text{but} \quad x_{N_G[j_1]} x_{N_G[j_2]} \nmid f.
\]
This implies that $j \in N_G[j_1] \cup N_G[j_2]$, but all other variables in that product belong to $\supp(f)$. Therefore,
\[
N_G[j_1], N_G[j_2] \subseteq \supp(f) = V(G) \setminus C, \quad \text{and} \quad N_G[j_1] \cap N_G[j_2] = \{j\}.
\]
Thus, $\dist_G(j_1, j_2) = 2$ and $\dist_{G - j}(j_1, j_2) \geq 3$.

\medskip

Conversely, assume that there exists a subset $C \subseteq V(G)$ satisfying conditions (i)-(iii). Since $H$ is nonempty, $C$ is nonempty. Let $f = x_{V(G) \setminus C}.$ We will show that $NI(G)^2 : f = \mm$. 

\begin{itemize}
    \item By condition (i), $f \notin NI(G)^2$.
    \item For any $i \in C$, condition (ii) implies that there exists an edge $e \in E(H)$ such that $C \cap e = \{i\}$. Hence, $x_i f \in NI(G)^2$, so $x_i \in NI(G)^2 : f$.
    \item For any $j \in V(G) \setminus C$, condition (iii) implies the existence of $j_1, j_2 \in N_G(j)$ such that $N_G[j_1], N_G[j_2] \subseteq V(G) \setminus C$, and $\dist_G(j_1, j_2) = 2$ while $\dist_{G-j}(j_1, j_2) \geq 3$. Hence, $x_j f$ is divisible by $x_{N_G[j_1]} x_{N_G[j_2]}$. Thus, $x_j \in NI(G)^2 : f$.
\end{itemize}

Therefore, $NI(G)^2 : f = \mm$, completing the proof.
\end{proof}

\begin{rem} Theorem \ref{thm_diam_3} can also be deduced from \cite[Theorem 5.2]{Al}.     
\end{rem}

Based on the result of Hien, Lam, and Trung \cite{HLT}, the maximal ideal $\mm$ is an associated prime of $I(G)^2$ if and only if the graph $G$ contains a dominating triangle. This implies that $\diam(G) \leq 3$. Similarly, we show that $\mm$ is an associated prime of $S/NI(G)^2$, $G$ must contain both a particular cycle structure and a special dominating set, which implies that $\diam(G) \le 6$.

Assume that $\mm$ is an associated prime of $S/NI(G)^2$. By Lemma \ref{lem_quotient}, there exists a squarefree monomial $f$ such that $NI(G)^2 : f = \mm$. Let $U = \supp f$ and $W = \{ i \in U \mid N_G[i] \subseteq U\}$.

\begin{lem}\label{lem_directed_cycle}
With the notation above, we have:
\begin{enumerate}
    \item $W \neq \emptyset$;
    \item $\dist_G(i,j) \le 2$ for all $i,j \in W$;
    \item the induced subgraph of $G$ on $W$ contains a cycle.
\end{enumerate}
\end{lem}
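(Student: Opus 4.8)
The plan is to unwind the hypothesis using Lemma~\ref{lem_quotient}: the equality $NI(G)^2 : f = \mm$ means precisely that $f \notin NI(G)^2$ while $x_i f \in NI(G)^2$ for every $i \in [n]$. Note first that $f \neq 1$, hence $U \neq \emptyset$, since every monomial of $NI(G)^2$ has degree at least two; and since $G$ is connected by \cite[Theorem 1.1]{NV} and has at least one edge, it has no isolated vertex. The heart of the matter is the following claim: \emph{for every $i \in U$ there are distinct vertices $j,k \in W$ with $N_G[j] \cap N_G[k] = \{i\}$.} To prove it, use $x_i f \in NI(G)^2$ to pick vertices $j,k$ with $x_{N_G[j]}\, x_{N_G[k]} \mid x_i f$, and compare exponents of each variable $x_\ell$ on the two sides. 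When $\ell \notin U$, the exponent of $x_\ell$ in $x_i f$ is $0$ (this uses $i \in U$, so that $\supp(x_i f)=U$), forcing $N_G[j] \cup N_G[k] \subseteq U$ and hence $j,k \in W$; when $\ell \in U \setminus\{i\}$, the exponent is $1$, forcing $N_G[j] \cap N_G[k] \subseteq \{i\}$. This intersection is nonempty: otherwise $x_{N_G[j]}\, x_{N_G[k]} = x_{N_G[j] \cup N_G[k]}$ is squarefree and divides $x_U = f$, whence $f \in NI(G)^2$, a contradiction. Finally $j \neq k$, since $j=k$ would give $N_G[j]=\{i\}$, i.e.\ an isolated vertex.

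Part (1) then follows immediately: choosing any $i \in U$ (possible since $f \ne 1$), the claim produces some $j \in W$, so $W \ne \emptyset$; in fact $|W| \ge 2$. For part (2), suppose $i,j \in W$ with $\dist_G(i,j) \ge 3$; then $N_G[i] \cap N_G[j] = \emptyset$, and since $N_G[i], N_G[j] \subseteq U$ by the definition of $W$, the monomial $x_{N_G[i]}\, x_{N_G[j]} = x_{N_G[i] \cup N_G[j]}$ divides $x_U = f$, again contradicting $f \notin NI(G)^2$.

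For part (3) I would show that every $w \in W$ has at least two neighbors in the induced subgraph $G[W]$ of $G$ on $W$; since a finite simple graph of minimum degree at least two contains a cycle, this finishes the proof. Apply the claim with $i = w$ to get distinct $j, k \in W$ with $N_G[j] \cap N_G[k] = \{w\}$. If $j = w$, then $w \in N_G[k]$ together with $k \ne w$ forces $k$ to be adjacent to $w$ in $G$, so $k \in N_G[w] \cap N_G[k] = \{w\}$, a contradiction; hence $j \ne w$, and then $w \in N_G[j]$ forces $j$ adjacent to $w$, so $j$ is a neighbor of $w$ in $G[W]$ (both lie in $W$). By symmetry $k \ne w$ is also a neighbor of $w$ in $G[W]$, and $j \ne k$, so $w$ has degree at least two in $G[W]$.

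The one place that needs care is the key claim, and within it the double use of $f \notin NI(G)^2$: once to rule out that the witnessing product $x_{N_G[j]}\, x_{N_G[k]}$ involves disjoint closed neighborhoods (which would already put $f$ in $NI(G)^2$), and once more, in part (2), as the obstruction forbidding two vertices of $W$ at distance $\ge 3$. Once the claim is in hand, parts (1)--(3) are short.
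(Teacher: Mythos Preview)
Your proof is correct and the core idea matches the paper's: for each $i\in U$ you produce $j,k\in W$ with $N_G[j]\cap N_G[k]=\{i\}$, which immediately yields (i), gives (ii) by the same ``$f\notin NI(G)^2$'' obstruction, and shows every vertex of $G[W]$ has degree at least two for (iii). The paper proceeds the same way but packages the key claim differently: rather than deriving it directly from $NI(G)^2:f=\mm$ as you do, it invokes Theorems~\ref{thm_diam_2} and~\ref{thm_diam_3} (already proved) to obtain the pair $(j,k)$, and for (iii) it builds an auxiliary directed graph on $W$ with out-degree two and chases directed cycles before concluding. Your route is more self-contained (no forward references to the main theorems) and your treatment of (iii) via the elementary fact ``minimum degree $\ge 2$ forces a cycle'' is cleaner than the paper's directed-cycle case analysis; the paper's approach, on the other hand, emphasizes how the lemma is a consequence of the structural characterizations in Theorems~\ref{thm_diam_2}--\ref{thm_diam_3}.
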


\begin{proof}
(i) Since $U = \supp (f)$ where $\mm = NI(G)^2 : f$ for some squarefree monomial $f$, we deduce that $U$ is non-empty. By Theorems~\ref{thm_diam_2} and~\ref{thm_diam_3}, for any $u \in U$, there exist $v_1, v_2 \in N_G(u)$ such that $N_G[v_1]$ and $N_G[v_2]$ are subsets of $U$. By the definition of $W$, this means $v_1, v_2 \in W$. Thus, $W \neq \emptyset$.

\vspace{1em} 

(ii) Assume for the sake of contradiction that $\dist_G(i,j) \ge 3$ for some $i,j \in W$. This implies that their closed neighborhoods are disjoint, i.e., $N_G[i] \cap N_G[j] = \emptyset$. Since $i,j \in W$, their closed neighborhoods $N_G[i]$ and $N_G[j]$ are both subsets of $U$. Therefore, $N_G[i] \cup N_G[j] \subseteq U$. This leads to a contradiction, as $C = V(G) \setminus U$ does not cover the edge $N_G[i] \cup N_G[j]$ of $H$.

\vspace{1em}

(iii) To establish this, we construct a directed graph $D$ with vertex set $W$. By Theorems~\ref{thm_diam_2} and~\ref{thm_diam_3}, for each $u \in W \subseteq U$, there exist $v_1, v_2 \in W$ such that $\dist_G(v_1, v_2) = 2$ and $\dist_{G - u}(v_1, v_2) \geq 3$. For each $u$, we fix one such pair $(v_1, v_2)$ and add directed edges $(u, v_1)$ and $(u, v_2)$ to $D$. Note that $v_1, v_2 \in N_G(u)$, so the underlying undirected graph of $D$ is a subgraph of the induced subgraph of $G$ on $W$.

By construction, every vertex in $D$ has out-degree exactly 2. Since $W$ is finite, following directed edges from any starting vertex eventually leads to a repeated vertex, which implies the existence of a directed cycle in $D$.

If any such directed cycle has length at least $3$, we are done. Suppose instead that all directed cycles in $D$ have length $2$. Then we may remove such a cycle and repeat the argument on the remaining vertices. If at any stage we find a longer cycle, we are done. Otherwise, we conclude that $D$ is a disjoint union of directed $2$-cycles. In this case, the underlying undirected graph of $D$ is a graph in which every vertex has degree exactly 2, implying that it is a disjoint union of (undirected) cycles. Therefore, the induced subgraph of $G$ on $W$ contains a cycle, completing the proof.
\end{proof}

\begin{cor}\label{cor_diam_bound}
Let $G$ be a simple graph and let $NI(G)$ be its closed neighborhood ideal. Assume that $\mm$ is an associated prime of $NI(G)^2$. Then $\diam(G) \leq 6$.    
\end{cor}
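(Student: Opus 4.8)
The plan is to extract, from Lemma~\ref{lem_directed_cycle} together with Theorems~\ref{thm_diam_2} and~\ref{thm_diam_3}, a nonempty set $W\subseteq V(G)$ that is both ``$2$-dominating'' in $G$ and of $G$-diameter at most $2$, and then to finish by the triangle inequality. I keep the notation $U=\supp f$, $C=V(G)\setminus U$, and $W=\{i\in U:N_G[i]\subseteq U\}$ introduced before Lemma~\ref{lem_directed_cycle}; by parts (i) and (ii) of that lemma we already have $W\ne\emptyset$ and $\dist_G(x,y)\le 2$ for all $x,y\in W$. If $\diam(G)\le 2$ there is nothing to prove, so I assume $\diam(G)\ge 3$, which makes the conditions of Theorem~\ref{thm_diam_3} available.

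The first step is to check that $\dist_G(u,W)\le 1$ for every $u\in U$. This is immediate from condition (iii) of Theorem~\ref{thm_diam_3} applied to $u\in U=V(G)\setminus C$: it supplies $v_1,v_2\in N_G(u)$ with $N_G[v_1],N_G[v_2]\subseteq U$, and then $v_1\in N_G[v_1]\subseteq U$ together with $N_G[v_1]\subseteq U$ forces $v_1\in W$, a neighbour of $u$. The second step is to check that every $i\in C$ has a neighbour lying in $U$, from which $\dist_G(i,W)\le 2$ will follow by the first step. Here I use condition (ii) of Theorem~\ref{thm_diam_3}: there is an edge $e=N_G[a]\cup N_G[b]$ of $H$ with $C\cap e=\{i\}$; since $\dist_G(a,b)\ge 3$ the closed neighbourhoods $N_G[a]$ and $N_G[b]$ are disjoint, so $i$ lies in exactly one of them, say $i\in N_G[a]$. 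If $i\ne a$, then $a\in N_G[a]\setminus\{i\}\subseteq e\setminus\{i\}\subseteq U$ is a neighbour of $i$ inside $U$; if $i=a$, then $N_G(i)=N_G[a]\setminus\{i\}\subseteq U$ and $N_G(i)\ne\emptyset$ because $G$ is connected with at least two vertices (connectedness being guaranteed by \cite[Theorem~1.1]{NV}). In both cases $\dist_G(i,U)=1$, hence $\dist_G(i,W)\le 2$. Combining the two steps, every vertex of $G$ is within $G$-distance $2$ of $W$.

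The final step is a single application of the triangle inequality: given arbitrary $p,q\in V(G)$, choose $w_p,w_q\in W$ with $\dist_G(p,w_p)\le 2$ and $\dist_G(q,w_q)\le 2$, and estimate
\[
\dist_G(p,q)\le \dist_G(p,w_p)+\dist_G(w_p,w_q)+\dist_G(w_q,q)\le 2+2+2=6,
\]
where $\dist_G(w_p,w_q)\le 2$ comes from Lemma~\ref{lem_directed_cycle}(ii). As $p,q$ were arbitrary, $\diam(G)\le 6$.

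Essentially everything here is distance bookkeeping; the one spot that needs care is the degenerate subcase $i=a$ in the second step, where one must fall back on connectedness of $G$ to know that $i$ has any neighbour at all (all such neighbours then automatically lying in $U$). I also note that part (iii) of Lemma~\ref{lem_directed_cycle} --- the presence of a cycle in the induced subgraph on $W$ --- is not needed for this bound; only the nonemptiness of $W$ and its $G$-diameter-at-most-$2$ property enter, so the cycle statement is of independent structural interest rather than a step in the proof.
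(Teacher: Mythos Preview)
Your proof is correct and follows essentially the same route as the paper: show that every vertex of $C$ has a neighbour in $U$ (via condition~(ii) of Theorem~\ref{thm_diam_3}), that every vertex of $U$ has a neighbour in $W$ (via condition~(iii)), and then combine with Lemma~\ref{lem_directed_cycle}(ii) and the triangle inequality to get the bound $2+2+2=6$. Your write-up is more detailed---in particular, your explicit treatment of the degenerate case $i=a$ and your observation that part~(iii) of Lemma~\ref{lem_directed_cycle} is not needed here are both correct and worth keeping.
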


\begin{proof}
We may assume that $G$ is connected and that $\diam(G) \geq 3$. We use the notation from Lemma~\ref{lem_directed_cycle}. By Theorem~\ref{thm_diam_3}, every vertex $v \in C$ has a neighbor in $U$. Moreover, such a neighbor in $U$ is dominated by some vertex in $W$. By Lemma~\ref{lem_directed_cycle}, any two vertices in $W$ are at distance at most $2$ in $G$. Therefore, for any pair of vertices in $G$, the maximum possible distance is at most $6$, completing the proof.
\end{proof}

\begin{cor}
Let $G = C_n$ be the cycle on $n$ vertices. Then $\mm$ is an associated prime of $NI(G)^2$ if and only if $n = 5$.     
\end{cor}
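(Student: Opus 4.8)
The plan is to split on the number of vertices and invoke the results already proved. For $n \le 5$ the cycle $C_n$ has diameter at most $2$, so Theorem~\ref{thm_diam_2} reduces everything to deciding whether $C_n$ is vertex diameter-$2$-critical. When $n \in \{3,4\}$ it is not: for $n = 3$ we have $\diam(C_3) = 1 \neq 2$, and for $n = 4$ we have $\diam(C_4) = 2$ but $C_4 - v = P_3$ has diameter $2$, not $\geq 3$. When $n = 5$ it is: $\diam(C_5) = 2$ and $C_5 - v = P_4$ has diameter $3 \geq 3$ for every vertex $v$. Hence, among $n \le 5$, the ideal $\mm$ is an associated prime of $S/NI(C_n)^2$ precisely for $n = 5$.

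For $n \geq 6$ we have $\diam(C_n) = \lfloor n/2 \rfloor \geq 3$, so Theorem~\ref{thm_diam_3} and Lemma~\ref{lem_directed_cycle} apply with $G = C_n$. Suppose, toward a contradiction, that $\mm$ is an associated prime of $S/NI(C_n)^2$; fix a squarefree monomial $f$ with $NI(C_n)^2 : f = \mm$ and set $U = \supp f$, $W = \{\, i \in U \mid N_{C_n}[i] \subseteq U \,\}$, and $C = V(C_n) \setminus U$ as in Lemma~\ref{lem_directed_cycle}. By part~(iii) of that lemma, the induced subgraph of $C_n$ on $W$ contains a cycle. The decisive elementary observation is that the only induced subgraph of $C_n$ containing a cycle is $C_n$ itself: removing any single vertex of $C_n$ yields a path, so every proper induced subgraph of $C_n$ is a disjoint union of paths, hence acyclic. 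Therefore $W = V(C_n)$; since $\bigcup_{i \in V(C_n)} N_{C_n}[i] = V(C_n)$ and $N_{C_n}[i] \subseteq U$ for every $i \in W$, we conclude $U = V(C_n)$ and $C = \emptyset$.

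But $C = \emptyset$ cannot be a vertex cover of $H$: since $\diam(C_n) \geq 3$ there exist vertices $i, j$ with $\dist_{C_n}(i, j) \geq 3$, so $N_{C_n}[i] \cup N_{C_n}[j]$ is a nonempty edge of $H$ that is not met by the empty set, contradicting condition~(i) of Theorem~\ref{thm_diam_3}. Hence $\mm$ is not an associated prime of $S/NI(C_n)^2$ for any $n \geq 6$, which together with the first paragraph gives the corollary. I expect the one point requiring a moment's care to be the claim that $C_n$ has no proper induced subgraph containing a cycle; everything else is a direct application of Theorems~\ref{thm_diam_2} and~\ref{thm_diam_3} and Lemma~\ref{lem_directed_cycle}. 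Alternatively, Corollary~\ref{cor_diam_bound} reduces the problem to the finitely many cases $6 \leq n \leq 13$, which could be dispatched by checking that no set $C$ as in Theorem~\ref{thm_diam_3} exists; but the argument through $W$ is shorter and handles all $n \geq 6$ at once.
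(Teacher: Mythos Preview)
Your proof is correct and follows essentially the same approach as the paper: split at $n=5$, apply Theorem~\ref{thm_diam_2} for small $n$, and for $n\ge 6$ use Lemma~\ref{lem_directed_cycle}(iii) together with the fact that every proper induced subgraph of $C_n$ is acyclic. The only cosmetic difference is that the paper derives the contradiction directly from ``$U$ is a proper subset yet supports a cycle,'' whereas you run the implication the other way (cycle forces $W=V(C_n)$, hence $C=\emptyset$, contradicting condition~(i) of Theorem~\ref{thm_diam_3}); the content is the same.
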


\begin{proof}
First, assume that $n \leq 5$. Then $\diam(G) \le 2$. It is straightforward to verify that $G$ is vertex diameter-$2$-critical if and only if $n = 5$. Hence, by Theorem~\ref{thm_diam_2}, $\mm$ is an associated prime of $S/NI(G)^2$ if and only if $n = 5$.

Now, assume that $n > 5$. Then $\diam(G) \geq 3$. Suppose, for contradiction, that $\mm$ is an associated prime of $S/NI(G)^2$. Then by Lemma~\ref{lem_quotient}, there exists a squarefree monomial $f$ such that $NI(G)^2 : f = \mm$, and let $U = \supp(f)$ be a proper subset of $V(G)$. By Lemma~\ref{lem_directed_cycle}, the induced subgraph of $G$ on $U$ must contain a cycle, a contradiction.
\end{proof}

Note that cycles are special cases of circulant graphs. We now consider the class of cubic circulant graphs. Recall that for a positive integer $n \geq 3$ and a subset $S \subseteq \{1, \ldots, \lfloor n/2 \rfloor\}$, the \emph{circulant graph} generated by $S$, denoted by $C_n(S)$, is the graph with vertex set $[n]$, where two distinct vertices $i$ and $j$ are adjacent if and only if either $|i - j| \in S$ or $n - |i - j| \in S$.

\begin{prop}
Let $G$ be a cubic circulant graph. Then $\mm$ is an associated prime of $S/NI(G)^2$ if and only if $G \cong C_8(1,4)$.
\end{prop}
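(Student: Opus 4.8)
The plan is to first determine the possible shapes of a cubic circulant graph and then feed them into Theorems~\ref{thm_diam_2} and~\ref{thm_diam_3}. If $n$ is odd, every vertex of $C_n(S)$ has even degree $2|S|$, so $n$ must be even, say $n=2m$; then the degree of a vertex equals $2|S\setminus\{m\}|+[m\in S]$, which is $3$ exactly when $m\in S$ and $|S|=2$. Hence a cubic circulant graph is $C_{2m}(a,m)$ for some $1\le a\le m-1$, and such a graph is connected if and only if $\gcd(a,m)=1$. Since $\mm$ being an associated prime of $S/NI(G)^2$ forces $G$ connected by \cite[Theorem 1.1]{NV}, I may assume $G=C_{2m}(a,m)$ with $\gcd(a,m)=1$. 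In a cubic graph every vertex has at most $1+3+6=10$ vertices within distance $2$, so $\diam(G)\le 2$ forces $n\le 10$; a direct check of the finitely many cases (the $n=10$ graphs all have diameter $3$) shows that the connected cubic circulant graphs of diameter at most $2$ are $K_4=C_4(1,2)$, $K_{3,3}=C_6(1,3)$, the triangular prism $C_6(2,3)$, and the M\"obius ladder $C_8(1,4)$. By Theorem~\ref{thm_diam_2}, $\mm$ is an associated prime of $S/NI(G)^2$ in this range if and only if $G$ is vertex diameter-$2$-critical; but $K_4$ has diameter $1$, deleting a vertex from $K_{3,3}$ or from $C_6(2,3)$ leaves a graph of diameter $2$, and $C_8(1,4)-v$ has diameter $3$ for every $v$ (by vertex-transitivity a single vertex suffices). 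So among cubic circulant graphs of diameter at most $2$, only $C_8(1,4)$ qualifies.

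It remains to show that $\mm$ is not an associated prime of $S/NI(G)^2$ when $G=C_{2m}(a,m)$ is connected of diameter at least $3$; note this forces $m\ge 5$. Assume it is. By Lemma~\ref{lem_quotient} fix a squarefree $f$ with $NI(G)^2:f=\mm$, set $U=\supp f$, and set $W=\{i\in U : N_G[i]\subseteq U\}$. As in the proof of Theorem~\ref{thm_diam_3}, $U$ is a proper subset of $V(G)$ (it misses a vertex of some $N_G[j]\cup N_G[k]$ with $\dist_G(j,k)\ge 3$), and by Lemma~\ref{lem_directed_cycle}(i) we have $W\ne\emptyset$. Fix $j\in W$. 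Condition (iii) of Theorem~\ref{thm_diam_3}, applied to $j\in U$, produces $j_1,j_2\in N_G(j)=\{j-a,\,j+a,\,j+m\}$ with $N_G[j_1],N_G[j_2]\subseteq U$ (hence $j_1,j_2\in W$), with $\dist_G(j_1,j_2)=2$ and $\dist_{G-j}(j_1,j_2)\ge 3$; the last two conditions mean exactly that $j$ is the unique common neighbor of $j_1$ and $j_2$ in $G$. Now I would compute common neighbors in $C_{2m}(a,m)$ directly: the ``rung pairs'' are excluded, because $\{j-a,j+m\}$ has the two common neighbors $j$ and $j+m-a$, and $\{j+a,j+m\}$ has the two common neighbors $j$ and $j+m+a$. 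Therefore $\{j_1,j_2\}=\{j-a,j+a\}$ --- and for $m\ge 5$ these two vertices are indeed nonadjacent with unique common neighbor $j$ --- so $j-a,j+a\in W$. Thus $W$ is invariant under translation by $\pm a$.

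Since $W\ne\emptyset$, it is a nonempty union of cosets of $\langle a\rangle$ in $\Z/2m\Z$, and $\gcd(a,m)=1$ gives $\gcd(a,2m)\in\{1,2\}$. If $\gcd(a,2m)=1$ then $\langle a\rangle=\Z/2m\Z$ and $W=V(G)$ at once. If $\gcd(a,2m)=2$ then $a$ is even and $m$ is odd, $\langle a\rangle$ is the set of even residues, and $W$ contains all residues of one parity, say the even ones; then $N_G[j]=\{j-a,j+a,j+m,j\}\subseteq U$ for each even $j$ forces every odd residue into $U$ (as $j+m$ is odd and ranges over all odd residues), after which the same identity for odd $j$ forces these into $W$ as well, so again $W=V(G)$. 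In either case $W=V(G)$ contradicts $W\subseteq U\subsetneq V(G)$, and this completes the proof. The technical crux --- and the step most prone to slips --- is the common-neighbor bookkeeping of the second paragraph: one must confirm that in $C_{2m}(a,m)$ the two rung pairs can never be the required pair $(j_1,j_2)$ while $\{j-a,j+a\}$ always can, and that the few degenerate configurations (which occur only for $m\le 4$, hence only for $C_6(1,3)$ and $C_8(1,4)$, already handled in the first paragraph) do not spoil this. Once that is secured, the closure argument settles all remaining $m\ge 5$ uniformly, with no need to invoke Corollary~\ref{cor_diam_bound}.
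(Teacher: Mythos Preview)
Your argument is correct. Both you and the paper dispatch the small cases via Theorem~\ref{thm_diam_2} and a direct check, but for the range $\diam(G)\ge 3$ (i.e.\ $m\ge 5$) the two proofs diverge genuinely. The paper invokes Lemma~\ref{lem_directed_cycle} to know that the induced subgraph on $W$ contains a cycle and that any two vertices of $W$ are at distance at most~$2$, then appeals to an external structural lemma on cubic circulants (\cite[Lemma~2.5]{HPV}) to force $W$ to be the vertex set of a $4$-cycle, and finishes with a pigeonhole count of pairs. You instead compute common neighbors in $C_{2m}(a,m)$ directly: for $j\in W$ the pair $(j_1,j_2)$ from condition~(iii) can only be $\{j-a,j+a\}$ (the two ``rung'' pairs always have a second common neighbor), whence $W$ is closed under translation by $\pm a$, and the coprimality hypothesis then forces $W=V(G)\supsetneq U$. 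Your route is more self-contained --- it avoids both Lemma~\ref{lem_directed_cycle}(iii) and the citation to \cite{HPV} --- and handles all $C_{2m}(a,m)$ with $\gcd(a,m)=1$ uniformly rather than reducing first (via Davis--Domke) to $a\in\{1,2\}$; the paper's approach, on the other hand, extracts sharper structural information about $W$ along the way.
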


\begin{proof}
First, by \cite[Theorem 1.1]{NV}, we may assume that $G$ is connected. By a result of Davis and Domke \cite{DD}, any connected cubic circulant graph is isomorphic to either $C_{2n}(1,n)$ for some $n \geq 2$, or $C_{2n}(2,n)$ for some odd integer $n \geq 3$. We first consider the case $G = C_{2n}(1,n)$.

Assume $n \leq 4$. Then $\diam(G) \leq 2$, and one can verify that $G$ is vertex diameter-$2$-critical if and only if $n = 4$. Thus, by Theorem~\ref{thm_diam_2}, $\mm$ is an associated prime of $S/NI(G)^2$ if and only if $n = 4$, i.e., $G \cong C_8(1,4)$.

Now assume $n > 4$, so $\diam(G) \geq 3$. Suppose, for contradiction, that $\mm$ is an associated prime of $S/NI(G)^2$. Using the notation from Lemma~\ref{lem_directed_cycle}, the induced subgraph of $G$ on $W$ must contain a cycle, and for any $u, v \in W$, we have $\dist_G(u, v) = 2$. By the structure of cubic circulant graphs (see \cite[Lemma 2.5]{HPV}), we deduce that $W$ must consist of the vertices of a $4$-cycle. In particular, $|W| = 4$. Since $N_G[W] \subseteq U$, so $|U| \geq 8$. Moreover, for each $u \in U$, there exists a pair $(v_1, v_2) \subseteq U$ such that $N_G(v_1) \cap N_G(v_2) = \{u\}$ and $\dist_{G - u}(v_1, v_2) \geq 3$. Each such pair must be distinct for different $u \in U$, implying at least $8$ distinct pairs. This contradicts the fact that $|W| = 4$.

The case $G = C_{2n}(2,n)$ with $n \geq 3$ odd follows by a similar argument.
\end{proof}

We now present additional examples of graphs for which $\mm$ is an associated prime of $NI(G)^2$, illustrating the richness of this class. The first example demonstrates that the graph $G$ may contain a leaf. The second example exhibits such a graph with diameter $5$. We do not know if such a graph of diameter $6$ exists.

\begin{exm}
Let $G$ be the graph shown below. 
\begin{center}
    \begin{tikzpicture}[scale=1.5,auto, vertex/.style={circle,draw,fill=blue!20,inner sep=2pt}]

\foreach \i/\angle in {1/112.5,2/157.5,3/202.5,4/247.5,5/292.5,6/337.5,7/22.5,8/67.5} {
    \node[vertex] (\i) at (\angle:2cm) {\i};
}

\node[vertex] (9) at (-3.5,0) {9};
\node[vertex] (10) at (-4.5,0) {10};

\draw (1)--(2);
\draw (1)--(8);
\draw (1)--(9);
\draw (2)--(3);
\draw (2)--(6);
\draw (3)--(4);
\draw (3)--(7);
\draw (4)--(5);
\draw (4)--(9);
\draw (5)--(6);
\draw (5)--(8);
\draw (6)--(7);
\draw (7)--(8);
\draw (9)--(10);

\end{tikzpicture}
\end{center}
The closed neighborhood ideal of $G$ is 
$$(x_1x_2x_8x_9,x_1x_2x_3x_6,x_2x_3x_4x_7,x_3x_4x_5x_9,x_4x_5x_6x_8,x_2x_5x_6x_7,x_3x_6x_7x_8,x_1x_5x_7x_8,x_9x_{10}).$$
We have 
$$NI(G)^2 : (x_1\cdots x_9) = (x_1,\ldots,x_{10}).$$    
\end{exm}

\begin{exm}
Let $G$ be the graph whose edge ideal is
\begin{align*}
    I(G) = (&x_1x_2,x_1x_{10},x_2x_3,x_2x_{12},x_2x_{15},x_3x_4,x_3x_{11},x_3x_{13},x_3x_{17},x_4x_5,x_4x_{11},\\
    &x_4x_{12},x_4x_{13},x_4x_{16},x_5x_6,x_5x_{13},x_5x_{18},x_6x_7,x_7x_8,x_7x_{14},x_7x_{17},x_8x_9,\\
    &x_8x_{12},x_8x_{16},x_8x_{17},x_8x_{18},x_9x_{10},x_9x_{13},x_9x_{16},x_9x_{17},x_{10}x_{11},x_{11}x_{12},\\
    &x_{11}x_{14},x_{12}x_{18},x_{13}x_{14},x_{14}x_{15},x_{15}x_{16},x_{15}x_{18},x_{17}x_{18}).
\end{align*}
Then $G$ has $\dist_G(1,6) = 5$ and its neighborhood ideal is 
\begin{align*}
    NI(G) = (&x_1x_2x_{10},x_5x_6x_7,x_1x_9x_{10}x_{11},x_2x_{14}x_{15}x_{16}x_{18},x_4x_{5}x_6x_{13}x_{18},x_6x_7x_8x_{14}x_{17},\\
    &x_4x_8x_{9}x_{15}x_{16},x_7x_{11}x_{13}x_{14}x_{15},x_1x_2x_3x_{12}x_{15},x_5x_8x_{12}x_{15}x_{17}x_{18},\\
    &x_3x_{7}x_8x_9x_{17}x_{18},x_2x_{4}x_8x_{11}x_{12}x_{18},x_8x_9x_{10}x_{13}x_{16}x_{17},x_{2}x_{3}x_4x_{11}x_{13}x_{17},\\
    &x_{3}x_{4}x_{5}x_{9}x_{13}x_{14},x_{3}x_{4}x_{10}x_{11}x_{12}x_{14},x_{7}x_{8}x_{9}x_{12}x_{16}x_{17}x_{18},x_3x_4x_5x_{11}x_{12}x_{13}x_{16}).
\end{align*}
We have $NI(G)^2 : (x_2\cdots x_5 x_7 \cdots x_{18}) = (x_1,\ldots,x_{18})$. Hence, $\mm$ is an associated prime of $NI(G)^2$.
\end{exm}

Finally, we discuss Kneser graphs for which $\mm$ is an associated prime of the second power of their closed neighborhood ideal.

\begin{defn}
Let $k < n$ be positive integers. The \emph{Kneser graph} $K(n,k)$ is the graph whose vertices correspond to the $k$-element subsets of an $n$-element set, where two vertices are adjacent if and only if the corresponding subsets are disjoint.
\end{defn}

\begin{prop}\label{prop_Kneser}
Let $n > k$ be positive integers, and let $K(n,k)$ denote the Kneser graph of $k$-element subsets of an $n$-element set. Assume that $n \geq 3k - 1$. Then $\mm$ is an associated prime of $S/NI(K(n,k))^2$ if and only if $k \geq 2$ and $n = 3k - 1$.
\end{prop}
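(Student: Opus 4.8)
The plan is to reduce the statement to Theorem~\ref{thm_diam_2} by first pinning down the diameter. I would begin with the degenerate case $k=1$: here $K(n,1)=K_n$ is the complete graph, so $NI(K_n)=(x_{[n]})$ is principal and $NI(K_n)^2=(x_{[n]}^2)$ has associated primes $(x_1),\dots,(x_n)$ only; since $n\ge 2$, $\mm$ is not among them, and the right-hand side of the equivalence also fails, so we may assume $k\ge 2$. For $k\ge 2$ and $n\ge 3k-1$ I would check directly that $\diam K(n,k)=2$: two $k$-subsets $A,B$ with $A\cap B=\emptyset$ are adjacent, and if $A\cap B\ne\emptyset$ then $|A\cup B|\le 2k-1\le n-k$, so $[n]\setminus(A\cup B)$ contains a $k$-subset, which is a common neighbour of $A$ and $B$; since overlapping pairs exist (as $k\ge 2$), the diameter is exactly $2$. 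Consequently Theorem~\ref{thm_diam_2} applies, and the proposition is equivalent to the purely graph-theoretic claim: for $k\ge 2$ and $n\ge 3k-1$, $K(n,k)$ is vertex diameter-$2$-critical if and only if $n=3k-1$.

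For the implication $n=3k-1\Rightarrow$ vertex diameter-$2$-critical, I would fix an arbitrary vertex $A$ and use that $[n]\setminus A$ has exactly $2k-1$ elements, writing it as $\{s\}\sqcup P\sqcup Q$ with $|P|=|Q|=k-1$ and setting $B=\{s\}\cup P$, $C=\{s\}\cup Q$. Then $B,C$ are distinct $k$-subsets, both different from $A$ (they are disjoint from $A$), with $B\cap C=\{s\}\ne\emptyset$, so $B\not\sim C$; and any common neighbour $D$ of $B$ and $C$ must satisfy $D\subseteq[n]\setminus(B\cup C)=A$, hence $D=A$. Therefore in $K(n,k)-A$ the vertices $B$ and $C$ are non-adjacent with no common neighbour, so $\dist_{K(n,k)-A}(B,C)\ge 3$, giving $\diam(K(n,k)-A)\ge 3$. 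As $A$ was arbitrary, $K(n,k)$ is vertex diameter-$2$-critical.

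For the implication $n\ge 3k\Rightarrow$ not vertex diameter-$2$-critical, I would fix any vertex $A$ and take distinct vertices $B,C\ne A$. If $B\cap C=\emptyset$ they are adjacent; otherwise $|B\cup C|\le 2k-1$, so $[n]\setminus(B\cup C)$ has at least $n-(2k-1)\ge k+1$ elements and hence contains at least $\binom{k+1}{k}=k+1\ge 2$ distinct $k$-subsets, at least one of which differs from $A$ and is a common neighbour of $B$ and $C$ in $K(n,k)-A$. Thus $\diam(K(n,k)-A)\le 2$ for every $A$, so $K(n,k)$ is not vertex diameter-$2$-critical, and by Theorem~\ref{thm_diam_2} the ideal $\mm$ is not an associated prime of $S/NI(K(n,k))^2$.

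I do not expect a serious obstacle; once the reduction through Theorem~\ref{thm_diam_2} is set up, the rest is elementary counting. The points needing the most care are verifying $\diam K(n,k)=2$ uniformly over the range $n\ge 3k-1$ while isolating $k=1$ separately, and, in the $n=3k-1$ case, confirming that the constructed $B$ and $C$ are genuinely distinct from one another and from $A$ and that $A$ is the \emph{unique} common neighbour of $B$ and $C$; this last step rests on the exact equality $n-(2k-1)=k$, which is precisely what separates $n=3k-1$ from $n\ge 3k$.
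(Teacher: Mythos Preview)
Your proof is correct and follows essentially the same route as the paper: reduce via Theorem~\ref{thm_diam_2} to the claim that $K(n,k)$ is vertex diameter-$2$-critical if and only if $n=3k-1$, then verify each direction by the same elementary counting on complements of $k$-subsets. The only differences are cosmetic: you handle $k=1$ explicitly and prove $\diam K(n,k)=2$ directly (the paper cites \cite{VV}), and you work with an arbitrary vertex $A$ rather than the fixed $u=\{1,\dots,k\}$ the paper uses (tacitly relying on the vertex-transitivity of Kneser graphs).
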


\begin{proof}
For simplicity, let $G = K(n,k)$. By \cite[Theorem 1]{VV}, we have $\diam(G) = 2$ when $n \geq 3k - 1$. By Theorem~\ref{thm_diam_2}, it suffices to prove that $G$ is vertex diameter-$2$-critical if and only if $n = 3k - 1$.

First, assume $n = 3k - 1$. Let $u = \{1, \ldots, k\}$, and define two other vertices: $v = \{k+1, \ldots, 2k\}$ and $w = \{2k, \ldots, 3k - 1\}$. Note that $|v \cap w| = 1$, so $v$ and $w$ are not adjacent in $G$. Furthermore, $[n] \setminus (v\cap w) = u$. Hence, $\dist_{G - u}(v, w) \geq 3$. This shows that $G$ is vertex diameter-2-critical.

Now assume $n \geq 3k$. Let $u = \{1, \ldots, k\}$. For any pair of non-adjacent vertices $v, w \in V(G) \setminus \{u\}$, we have $|v \cup w| \leq 2k - 1$, so
\[
|[n] \setminus (v \cup w)| \geq n - (2k - 1) \geq k + 1.
\]
This implies that there exists at least one vertex other than $u$ in the intersection $N_G[v] \cap N_G[w]$. Therefore, $v$ and $w$ are connected by a path of length at most $2$ in $G - u$, so $\diam(G - u) = 2$. Hence, $G$ is not vertex diameter-$2$-critical.
\end{proof}

Note that if $n \leq 2k$, then $G$ is disconnected. Thus, the remaining critical range to consider is $k \geq 3$ and $2k + 1 \leq n \leq 3k - 2$. We make the following conjecture:

\begin{conj}
Let $G = K(n,k)$ be the Kneser graph. Then $\mm$ is an associated prime of $NI(G)^2$ if and only if $k \geq 2$ and $n = 3k - 1$.
\end{conj}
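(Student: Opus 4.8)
The statement to prove is the conjectured characterization for Kneser graphs $K(n,k)$: that $\mm$ is an associated prime of $NI(K(n,k))^2$ if and only if $k \geq 2$ and $n = 3k-1$. By Proposition~\ref{prop_Kneser}, the case $n \geq 3k-1$ is already settled, and the case $n \leq 2k$ is trivial since $G$ is then disconnected (so $\mm$ is never an associated prime by \cite[Theorem 1.1]{NV}). Therefore the entire content of the conjecture reduces to the range $k \geq 3$ and $2k+1 \leq n \leq 3k-2$, where one must show that $\mm$ is \emph{not} an associated prime of $S/NI(G)^2$. My plan is to show that in this range $\diam(K(n,k)) \geq 3$ and then to rule out $\mm$ as an associated prime via Lemma~\ref{lem_directed_cycle}, by arguing that the induced subgraph on the set $W$ cannot contain a cycle.

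First I would compute the diameter. For $2k+1 \leq n \leq 3k-2$ one has $n < 3k-1$, and a standard computation (see \cite{VV}) gives $\diam(K(n,k)) = \lceil (k-1)/(n-2k) \rceil + 1 \geq 3$; in particular two $k$-subsets $A, B$ are at distance $\geq 3$ precisely when they overlap too much to be ``bridged'' by a single common disjoint set, i.e.\ when $|[n] \setminus (A \cup B)| < k$, equivalently $|A \cap B| > 2k - n$. So we are in the regime governed by Theorem~\ref{thm_diam_3} and Lemma~\ref{lem_directed_cycle}. Suppose for contradiction $\mm \in \Ass(S/NI(G)^2)$; take $f$ squarefree with $NI(G)^2 : f = \mm$, put $U = \supp(f)$ and $W = \{A \in U \mid N_G[A] \subseteq U\}$. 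By Lemma~\ref{lem_directed_cycle}(ii), any two vertices of $W$ are at distance $\leq 2$ in $G$; by the distance criterion just described, this forces $|A \cap B| \leq 2k-n$ for all $A, B \in W$. By Lemma~\ref{lem_directed_cycle}(iii), the induced subgraph of $G$ on $W$ contains a cycle. The crux is to show this is impossible: I would argue that a set of $k$-subsets that is pairwise ``$\leq 2k-n$ overlapping'' is too sparse/structured to support a cycle in the Kneser adjacency, analogously to the cubic-circulant argument in the excerpt which pinned $W$ down to a specific $4$-cycle. Concretely, each $u \in W$ contributes (via Theorem~\ref{thm_diam_3}(iii) applied inside $U$) a pair $(v_1, v_2) \subseteq W$ with $N_G[v_1] \cap N_G[v_2] = \{u\}$ and $\dist_{G-u}(v_1,v_2) \geq 3$; I would extract from the constraint $N_G[v_1] \cap N_G[v_2] = \{u\}$ a strong combinatorial restriction on how $v_1, v_2, u$ sit as subsets of $[n]$, then show that iterating this around a cycle in $W$ exhausts the ambient $n$-set and produces a contradiction with $n \leq 3k-2$.

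The main obstacle I anticipate is precisely this last step: controlling the structure of $W$ and the associated ``witness pairs'' tightly enough to preclude a cycle. Unlike the cubic circulant case, Kneser graphs have large degree, so a purely degree-counting argument will not suffice; one needs to exploit the set-intersection geometry. A promising line is to show that $N_G[v_1] \cap N_G[v_2] = \{u\}$ together with $v_1, v_2$ being non-adjacent (distance $2$) forces $v_1 \cup v_2$ to be ``almost all'' of $[n]$ — specifically $|v_1 \cup v_2| \geq n - k + 1$ would be ideal, since $u$ being the unique common disjoint $k$-set means the complement of $v_1 \cup v_2$ has size exactly $k$ and equals $u$, giving $|v_1 \cup v_2| = n - k$ and $|v_1 \cap v_2| = 2k - n$, and moreover $u = [n] \setminus (v_1 \cup v_2)$ is \emph{determined} by the pair. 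Then along a cycle $u_1, u_2, \ldots, u_\ell, u_1$ in $W$, consecutive vertices are adjacent (disjoint as sets), and each $u_i$ equals the complement of the union of its witness pair; chasing these complement relations around the cycle, combined with the pairwise overlap bound $|u_i \cap u_j| \leq 2k-n$, should force $\sum |u_i|$ to exceed what $[n]$ can accommodate, i.e.\ contradict $n \leq 3k-2$. If the ``$u$ is determined by the pair'' observation holds, the argument becomes a clean counting contradiction; verifying that observation and handling the possibility $2k - n = 0$ (where overlaps are forced to be empty, so $W$ is a clique in the Kneser graph, i.e.\ a family of pairwise disjoint $k$-sets — which already caps $|W| \leq n/k < 3$, too small for a cycle) are the technical points to nail down. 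Finally, I would assemble: diameter $\geq 3$ puts us under Theorem~\ref{thm_diam_3}; Lemma~\ref{lem_directed_cycle} forces a cycle in $W$; the set-geometric analysis forbids it; hence $\mm \notin \Ass(S/NI(G)^2)$ throughout $2k+1 \leq n \leq 3k-2$, which together with Proposition~\ref{prop_Kneser} completes the proof.
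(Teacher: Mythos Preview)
The statement you are attempting to prove is stated in the paper as a \emph{conjecture}, not a theorem; the paper gives no proof and explicitly leaves the range $k \geq 3$, $2k+1 \leq n \leq 3k-2$ open. So there is no paper proof to compare your proposal against.

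That said, your proposal has a concrete arithmetic error that propagates through the argument. From $|[n]\setminus(A\cup B)| < k$ one gets $n - (2k - |A\cap B|) < k$, i.e.\ $|A\cap B| < 3k-n$, \emph{not} $|A\cap B| > 2k-n$. In the range $2k+1 \le n \le 3k-2$ the quantity $2k-n$ is negative, so your stated bound ``$|A\cap B|\le 2k-n$ for all $A,B\in W$'' is vacuously false and your later case ``$2k-n=0$'' never occurs. The correct dichotomy for $\dist_G(A,B)\le 2$ is: either $A\cap B=\emptyset$ or $|A\cap B|\ge 3k-n\ (\ge 2)$. Your observation that $N_G[v_1]\cap N_G[v_2]=\{u\}$ forces $u=[n]\setminus(v_1\cup v_2)$ and $|v_1\cap v_2|=3k-n$ is correct (once the constant is fixed), and is a genuinely useful structural fact.

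Beyond the arithmetic, the argument is not complete: the crucial step---that a cycle in the induced subgraph on $W$, together with the witness-pair constraints from Theorem~\ref{thm_diam_3}(iii), leads to a contradiction with $n\le 3k-2$---is only asserted (``should force $\sum|u_i|$ to exceed\ldots''), not proved. With the corrected constraint, vertices of $W$ may overlap substantially (in $\ge 3k-n$ elements) rather than being nearly disjoint, so a naive union-size count will not obviously overflow $[n]$; you would need a sharper argument exploiting that each $u\in W$ is the \emph{complement} of the union of its witness pair. As it stands this is a plausible strategy toward an open problem, not a proof.
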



\begin{thebibliography}{99}



\bibitem[Al]{Al}
G. Alesandroni,
{\em Monomial ideals with large projective dimension}, Journal of Pure and Applied Algebra {\bf 224} (2020), 106257.



\bibitem[B]{B}
C. Berge, 
{\em Graphs and Hypergraphs}, North-Holland, Amsterdam (1973).

\bibitem[BFT]{BFT}
E. Bela, G. Favacchio, and N. Tran,
\emph{In the shadows of a hypergraph: looking for associated primes of powers of square-free monomial ideals}, J Algebr. Comb. {\bf 53} (2021), 11--29.



\bibitem[DD]{DD} G. Davis and G. Domke (2002), \textit{3-Circulant Graphs}, Journal of Combinatorial Mathematics and Combinatorial Computing {\bf 40}, 133--142.

\bibitem[FHV1]{FHV1} C. Francisco, H. T. Ha, and A.~Van Tuyl, 
\emph{Associated primes of monomial ideals and odd holes in graphs}, J. Algebr. Comb. {\bf 32} (2010), 287--301.

 
\bibitem[FHV2]{FHV2} C. Francisco, H. T. Ha, and A.~Van Tuyl, 
\emph{Colorings of hypergraphs, perfect graphs, and associated primes of powers of monomial ideals}, Journal of Algebra {\bf 331} (2011), 224--242.
 

\bibitem[HH]{HH}
J. Herzog and T. Hibi,
\emph{Monomial Ideals},
Graduate Texts in Mathematics {\bf 260}, Springer, Berlin (2011)

\bibitem[HLT]{HLT}
H. T. T. Hien, H. M. Lam, and N. V. Trung,
{\em Saturation and associated primes of powers of edge ideals}, Journal of Algebra {\bf 439} (2015), 225--244.


\bibitem[HM]{HM}
H. T. Ha and S. Morey,
\emph{Embedded associated primes of powers of squarefree monomial ideals}, J. Pure Appl. Algebra {\bf 214} (2010), 301--308.


\bibitem[HPV]{HPV}
N. T. Hang, M. H. Pham, and T. Vu,
{\em Regularity of Powers and Symbolic Powers of Edge Ideals of Cubic Circulant Graphs}, Ann. Comb. (2025). https://doi.org/10.1007/s00026-025-00765-y. 

\bibitem[HS]{HS} 
J. Honeycutt and K. Sather-Wagstaff, \emph{Closed Neighborhood Ideals of Finite Simple Graphs}, La Matematica {\bf 1} (2022), 387--394.

 

\bibitem[L]{L} L. Lov\'asz, 
{\em Kneser's conjecture, chromatic number, and homotopy}, J. Combin. Theory, Ser. A {\bf 25} (1978), 319--324.

 
\bibitem[LT]{LT}
H. M. Lam and N. V. Trung,
{\em Associated primes of powers of edge ideals and ear decompositions of graphs}, Transactions of the American Mathematical Society {\bf 372} (2019), 3211--3236.


\bibitem[MW]{MW}
T. Matsushita and S. Wakatsuki,
{\it Dominance complexes, neighborhood complexes and combinatorial Alexander duals}, arXiv:2312.02639. 

\bibitem[NQ]{NQ}
 M. Nasernejad and A. A. Qureshi, \emph{Algebraic implications of neighborhood hypergraphs and their transversal hypergraphs}, Communications in Algebra {\bf 52} (2024), 2328--2345.
 
\bibitem[NQBM]{NQBM} 
M. Nasernejad, A. A. Qureshi, S. Bandari, and A. Musapasaoglu,
{\em Dominating Ideals and Closed Neighborhood Ideals of Graphs}, Mediterr. J. Math. {\bf 19} (2022), 152.


\bibitem[NV] {NV} 
H. D. Nguyen and T. Vu,
{\em Powers of sums and their homological invariants}, J. Pure Appl. Algebra {\bf 223} (2019), 3081--3111.


\bibitem[SM]{SM}
L. Sharifan and S. Moradi,
\emph{Closed neighborhood ideal of a graph}, Rocky Mt. J. Math. {\bf 50} (2020), 1097--1107.


 
\bibitem[TT]{TT}
N. Terai and N. V. Trung,
\emph{On the associated primes and the depth of the second power of squarefree monomial ideals}, Journal of Pure and Applied Algebra {\bf 218} (2014), 1117--1129.



\bibitem[VV]{VV}
M. Valencia-Pabon and J. Vera,
\emph{On the diameter of Kneser graphs}, Discrete Mathematics {\bf 305} (2005), 383--385.



\end{thebibliography}
\end{document}